\documentclass{amsart}

\usepackage[dvipsnames]{xcolor}
\usepackage{amsmath}
\usepackage{enumerate}
\usepackage{enumitem}
\usepackage{amssymb}
\usepackage{extarrows}
\usepackage{amsthm}
\usepackage{thmtools}
\usepackage{mathrsfs}
\usepackage[colorlinks, linkcolor = SeaGreen, citecolor = SeaGreen]{hyperref}
\usepackage{bm}
\hypersetup{colorlinks = true}

\newtheorem{thm}{Theorem}[section]

\newtheorem{lem}[thm]{Lemma}

\newtheorem{ques}[thm]{Question}
\theoremstyle{definition}

\theoremstyle{remark}

\newcommand{\CC}{\mathbb{C}}
\newcommand{\DD}{\mathbb{D}}

\newcommand{\wind}{\mathrm{wind}}

\title[The Douglas question]{The Douglas question for functions of the form $\bm{\overline{z}+h}$ with $\bm{h}$ in the disk algebra}

\author[Wang]{Jiawei Wang}
\address{School of Mathematical Sciences, Inner Mongolia University, Hohhot, 010021, P. R. China}
\email{jiaweiwang1516@163.com}

\author[Zhao]{Xianfeng Zhao}
\address{College of Mathematics and Statistics, Chongqing University, Chongqing, 401331, P. R. China}
\email{xianfengzhao@cqu.edu.cn}

\keywords{Toeplitz operator; Bergman space; The Douglas question}

\subjclass[2010]{47B35}

\begin{document}
	\begin{abstract}
		We prove that the Toeplitz operator $T_{\overline{z}+h}$  is invertible on the Bergman space provided that
 $|\overline{z}+h|$ is bounded below by some positive constant on the open unit disk,
 where $h$ is in the disk algebra. This provides a  general class of  harmonic functions for which the answer to the Douglas question on the Bergman space is affirmative.
	\end{abstract}
	
	\maketitle
	
	\section{Introduction} \label{Introduction}
	
Let $dA$ denote the Lebesgue area measure on the open unit disk $\mathbb D$ in the complex plane $\mathbb C$, normalized so that the measure of the disk
$\mathbb D$ is $1$. In the sequel, we denote by $L^2(\mathbb D)=L^2(\mathbb D, dA)$ the Hilbert space of square integrable functions with respect to $dA$.
The Bergman space $L_a^2$ is the closed subspace of $L^2(\mathbb D)$ consisting of analytic functions on $\mathbb D$, which
is also a Hilbert space with the inner product
$$\langle f, g \rangle=\int_{\mathbb D} f(z)\overline{g(z)}dA(z), \ \ \ \  f, g\in L_a^2.$$
Let $P$ be the orthogonal projection  from $L^2(\mathbb D)$ onto the Bergman space $L_a^2$. Let $L^\infty(\mathbb D)= L^\infty(\mathbb D, dA)$ denote the Banach algebra of essentially bounded functions on $\mathbb D$ with respect to the measure $dA$. For $\varphi \in L^\infty(\mathbb D)$, the  Toeplitz operator $T_{\varphi}$ with symbol $\varphi$ on the Bergman space  is  defined by $$T_\varphi f=P(\varphi f)$$
for $f\in L_a^2$.

Let $\varphi\in L^{\infty}({\mathbb D})$.  The Berezin transform of the Toeplitz operator
$T_\varphi$ is defined by
\begin{align*}
\widetilde{T_{\varphi}}(z)
=\langle T_{\varphi}k_z,k_z \rangle=\int_{\mathbb D}\varphi(w)|k_z(w)|^2dA(w),
\end{align*}
where $k_z$ is the normalized Bergman reproducing kernel for $L_a^2$ given by $$k_z(w)=\frac{1-|z|^2}{(1-\overline{z}w)^2}.$$
Usually, the  Berezin transform of
$T_\varphi$ is denoted by $\widetilde{\varphi}$ for simplicity, which we also call the Berezin transform of $\varphi$. For more information about Toeplitz operators on the Bergman space and the Berezin transform,
we refer to \cite{Ax} and \cite{Zhu}.

The significance of spectral theory in operator theory derives from its role as a unified framework for investigating the invertibility and dynamical behavior of operators in terms of their spectra as numerical invariants. Among the various topics in Toeplitz operator theory, the characterization of invertibility via properties of symbols stands as a particularly significant and challenging problem. See \cite{Dou}, \cite{Lue} and \cite{Zhu} for detailed discussions. The original Douglas question is an influential open question concerning the invertibility problem of Toeplitz operators on the Hardy space (see \cite{Dou1}):
\begin{ques}\label{Douglas-H}
Suppose that $\varphi$ is an essentially bounded function on the unit circle $\partial \mathbb D$, and  the harmonic extension \emph{(}poisson integral\emph{)} of $\varphi$ is bounded below by some positive constant on the unit disk $\mathbb D$, i.e., there exists a constant $\delta>0$ such that $|\widehat{\varphi}(z)|\geqslant \delta$ for all $z\in \mathbb D$, where
$$\widehat{\varphi}(z):=\frac{1}{2\pi}\int_0^ {2\pi}\varphi (e^{\mathrm{i}\theta}) \frac{1-|z|^2}{|1-ze^{-\mathrm{i}\theta}|^2}d\theta, \ \ \ \ z\in \mathbb D.$$
Then is the Toeplitz operator with symbol $\varphi$  invertible on the Hardy  space?
\end{ques}

The original Douglas question was studied by many authors. For instance, Tolokonnikov \cite{Tol} and Nikolskii \cite{Nik} gave affirmative answers to Question  \ref{Douglas-H} under certain conditions. In 2002,  by employing the martingale limit of a family of step functions, Wolff \cite{Wol} constructed a bounded function whose harmonic extension is bounded below on the unit disk, but for which the corresponding Toeplitz operator is not invertible on the Hardy space, thereby providing a negative answer to the original Douglas question.

  Note that the harmonic extension of an essentially bounded function $\varphi$ defined on $\partial \mathbb D$ is equal to the Berezin transform of the Toeplitz operator with symbol $\varphi$ on the Hardy space.
This observation leads to a natural analogue of the original Douglas question in the setting of the Bergman space:
\begin{ques} [\emph{The Douglas question on the Bergman space}]\label{Douglas-B}
Let $\varphi$ be in $L^\infty (\mathbb D)$. Then is $T_{\varphi}$ invertible  on the Bergman space $L_a^2$ if
  $|\widetilde{\varphi}(z)| \geqslant \delta$ for some positive constant
 $\delta$
 and for all $z\in \mathbb D$?
 \end{ques}

In view of the fact that many properties of Toeplitz operators with harmonic symbols on the Bergman space are analogous to those of Toeplitz operators on the Hardy space, scholars have paid particular attention to the case of harmonic symbols in Question \ref{Douglas-B}. This is the most natural, important, and difficult case of the Douglas question on the Bergman space.

 We first recall the existing results concerning the Douglas question on the Bergman space in the case of harmonic symbols. In 1979, McDonald and Sundberg \cite{MS} showed that for a bounded real-valued harmonic function $\varphi$, the spectrum of the Toeplitz operator $T_\varphi$ is equal to the  interval $[\mathrm{inf}(\varphi), \mathrm{sup}(\varphi)]$.  Since the Berezin transform of a bounded harmonic function is identical to the function itself, the result of \cite{MS} provides  a positive answer to the Douglas question for real-valued harmonic symbols.
More than thirty years later,  Zhao and Zheng \cite{ZZ} established that the Toeplitz operator $T_{\overline{z}+(az+b)}$ is invertible on the Bergman space $L_a^2$ if and only if its symbol $\overline{z}+(az+b)$ is invertible in $L^\infty(\mathbb D)$. Later, Guo, Zhao and Zheng \cite{GZZ} proved that $T_{\overline{z}+p}$ is invertible on $L_a^2$ provided that $|\overline{z}+p|$ is bounded below by some positive constant on the unit disk $\mathbb D$, where $p$ is an analytic polynomial of degree at most $2$. However,  when $p$ is an analytic polynomial with real coefficients,  Guan and Zhao \cite{GZ} proved that $T_{\overline{z}+p}$ is invertible on the Bergman space if  $\overline{z}+p$ is invertible in $L^\infty(\mathbb D)$. In 2022, Yoneda \cite{Yo} proved that
$T_{\alpha g+\beta\overline{g}}$ is invertible on $L_a^2$ if
$\inf\limits_{z\in \mathbb D}|\alpha g(z)+\beta\overline{g}(z)|>0$, where $\alpha, \beta\in \mathbb C$ and $g\in H^\infty$, the space of bounded analytic functions on $\mathbb D$. Recently, Cui, Lu, Yang and Zu \cite{CLYZ} established that, for a positive integer $m$  and $\alpha,\beta\in \mathbb C$, the spectrum of the Toeplitz operator $T_{\overline{z}^m+(\alpha z^m+\beta)}$ is precisely the closure of the range of $\overline{z}^m+(\alpha z^m+\beta)$.  Therefore, the results of \cite{ZZ, GZZ, GZ}, \cite{Yo} and \cite{CLYZ}  all provide affirmative answers to the Douglas question on the Bergman space for some special harmonic symbols.
For general harmonic symbols, Zhao and Zheng \cite{ZZ1, ZZ2} showed that $T_{\varphi}$ is invertible on the Bergman space if
$\varphi$ is a bounded harmonic function on $\mathbb D$ satisfying
$$|\varphi(z)|\geqslant \delta \|\varphi\|_\infty\ \ \  \ (z\in \mathbb D)$$
for some $\delta\in (\frac{2\sqrt{2}}{3}, 1)$. Using the same idea as in  \cite{ZZ1},
 Yoneda \cite{Yo}
obtained an analogous result, namely that if $g$ and $h$ belong to the Bloch space and
$$|h(z)+\overline{g}(z)|>4.27\max\{\|h\|_{\mathcal B}, \|g\|_{\mathcal B}\},$$
then
$T_{h+\overline{g}}$ is invertible on $L_a^2$, where  $\|h\|_{\mathcal B}$ and $\|g\|_{\mathcal B}$ denote the norms of $h$ and $g$ in the Bloch space, respectively.

In 2016, Zhao and Zheng \cite{ZZ1} established that the answer to Question \ref{Douglas-B} is also affirmative for bounded nonnegative symbols. However, for symbols that are not nonnegative or not harmonic, the answer to the Douglas question on the Bergman space $L_a^2$ may be negative. In fact, it was shown in \cite{ZZ1} that there exists a continuous radial function $\varphi$ on the closed disk $\overline{\mathbb D}$ such that
$\varphi$ and $\widetilde{\varphi}$ are both invertible in $L^\infty (\mathbb D)$, while the Toeplitz operator $T_\varphi$ is not invertible on $L_a^2$.
Additionally, for the investigation of the Douglas question for Toeplitz operators with measure symbols, one can consult \cite{Cu} and \cite{CLZ} for more details.

Although the Douglas question on the Bergman space $L_a^2$ remains open for harmonic symbols, significant progress has been achieved in the study of the invertibility of Toeplitz operators with various symbols on $L_a^2$. In 1981, Luecking \cite{Lue} obtained a necessary and sufficient condition for $T_{\varphi}$ to be  invertible on $L_a^2(\mathbb D)$ in the case that
$\varphi$ is nonnegative on $\mathbb D$.  Based on Luecking's results,  Faour \cite{Fao}  gave
a necessary condition for $T_{\varphi}$ to be  invertible if  $\varphi$ is  continuous on the closed unit disk and satisfies that
$|\varphi(z_1)|\geqslant |\varphi(z_2)|$ whenever  $|z_1|\leqslant |z_2|$. In a more general setting, using the Berezin transform and atomic decomposition, Karaev \cite{Kar} obtained sufficient conditions for bounded linear operators to be invertible on the Bergman space. By applying Karaev's results, G\"{u}rdal and S\"{o}hret
\cite{Gur} established  a sufficient condition on the invertibility of Toeplitz operators with bounded symbols. By employing the theory of differential equations with holomorphic coefficients, Tikaradze  \cite{Tik} characterized the invertibility of the Toeplitz operator $T_\varphi$ whose symbol is given by $\varphi(z)=az^n(z-w)^m+(\overline{z}-1/w)^m$, where $a\in\mathbb C$, $w\in\mathbb D\backslash\{0\}$, and  $m,n\in \mathbb N$. Under a certain geometric assumption, \v{C}u\v{c}kovi\'{c} and Taskinen \cite{CT} recently showed that  the Toeplitz operator $T_\varphi$ is invertible on the Bergman space if and only if the Berezin transform of $|\varphi|$ is invertible in $L^\infty(\mathbb D)$. Furthermore, they studied a class of general harmonic polynomials and characterized the invertibility of the corresponding Toeplitz operators.

Let $C(\overline{\mathbb D})$ denote the space of continuous functions on the closed unit disk $\overline{\mathbb D}$. Let $A(\mathbb D)$ denote the disk algebra, i.e., $A(\mathbb D)=H^\infty(\mathbb D)\cap C(\overline{\mathbb D})$. In this paper, we show in Theorem \ref{Main} that the Toeplitz
operator $T_{\overline{z}+h}$ is invertible on the Bergman space $L_a^2$ whenever
the symbol $\overline{z}+h$ does not vanish on the closed disk $\overline{\mathbb D}$, where $h\in A(\mathbb D)$. This yields an affirmative answer to the Douglas question on the Bergman space for such a class of harmonic functions.

\section{Preliminaries} \label{Preliminaries}

In this section, we recall some basic facts about Toeplitz operators on the Bergman space $L_a^2$ that will be needed in the next section. Let us begin with the following lemma about the  essential spectra and Fredholm index of Toeplitz operators with continuous symbols on $L_a^2$.
	
	\begin{lem}[{\cite[Theorem 24]{StrZ}}]\label{FI}
		Suppose that $\varphi$ is continuous on the closed disk  $\overline{\DD}$. Then the essential spectrum of $T_\varphi$ \textup{(}denoted by $\sigma_{\mathrm{e}} (T_{\varphi}))$ is  equal to $\varphi (\partial \DD)$, and the Fredholm index is given by
		\begin{align*}
			\mathrm{index}{(T_{\varphi} - \lambda I)} = -\wind (\varphi (\partial \DD), \lambda)
		\end{align*}
for each $\lambda \in \CC\backslash\varphi (\partial \DD)$,
		where  $I$ stands for the identity operator on the Bergman space and  $\wind (\varphi (\partial \DD), \lambda)$ is the winding number of the curve $\varphi (\partial \DD)$  with respect to the point $\lambda$.
	\end{lem}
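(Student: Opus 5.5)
The plan is to realize $\varphi\mapsto T_\varphi$ as a homomorphism into the Calkin algebra that factors through $C(\partial\DD)$, and then read off both the essential spectrum and the index from boundary data.

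\textbf{Step 1 (compactness facts).} First I show that $T_\varphi$ is compact whenever $\varphi\in C(\overline{\DD})$ vanishes on $\partial\DD$. Such a $\varphi$ is a uniform limit of continuous functions with compact support in $\DD$. For a compactly supported bounded $\psi$, the operator $T_\psi$ is Hilbert--Schmidt (even trace class), since point evaluations are uniformly bounded on compact subsets of $\DD$. As $\|T_\psi\|\leqslant\|\psi\|_\infty$, compactness passes to the uniform limit.

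Next I show that the semicommutator $T_{fg}-T_fT_g=H_{\overline f}^{*}H_g$ is compact for $f,g\in C(\overline{\DD})$, where $H_g=(I-P)M_g$ is the Hankel operator. By Stone--Weierstrass, $g$ is a uniform limit of polynomials in $z$ and $\overline z$. Since $H_{z^j}=0$ and $H_{\overline z^k z^j}$ has finite rank (it kills $z^m L_a^2$ up to a finite-dimensional correction), $H_g$ is compact.

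I expect this step to be the main technical point. Everything else is soft algebra once it is in place.

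\textbf{Step 2 (essential spectrum).} By Step 1, $\varphi\mapsto \pi(T_\varphi)$ is a unital homomorphism $C(\overline{\DD})\to\mathcal B(L_a^2)/\mathcal K$. Its kernel contains all functions vanishing on $\partial\DD$, so $\pi(T_\varphi)$ depends only on $\varphi|_{\partial\DD}$.

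If $\lambda\notin\varphi(\partial\DD)$, I extend $1/(\varphi-\lambda)|_{\partial\DD}$ to some $\psi\in C(\overline{\DD})$, for instance by Tietze or by the Poisson integral. Then $\pi(T_\psi)$ inverts $\pi(T_\varphi-\lambda)$, so $T_\varphi-\lambda$ is Fredholm.

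Conversely, suppose $\varphi(\zeta)=\lambda$ with $\zeta\in\partial\DD$, and take $z\to\zeta$. The normalized kernels $k_z$ tend to $0$ weakly, and
\[
\|(T_\varphi-\lambda)k_z\|\leqslant\|(\varphi-\lambda)k_z\|_{L^2}\to 0 .
\]
This holds because $|k_z|^2\,dA$ concentrates at $\zeta$ and $\varphi$ is continuous there. Hence $T_\varphi-\lambda$ is not Fredholm, and $\sigma_{\mathrm e}(T_\varphi)=\varphi(\partial\DD)$.

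\textbf{Step 3 (index).} Fix $\lambda\notin\varphi(\partial\DD)$ and set $n=\wind(\varphi(\partial\DD),\lambda)$. Then $(\varphi-\lambda)|_{\partial\DD}$ is homotopic to $z^n$ through nonvanishing functions on $\partial\DD$. I extend each stage of this homotopy continuously to $\overline{\DD}$, for example via $F_t(re^{i\theta})=rF_t(e^{i\theta})$, giving a norm-continuous path of Toeplitz operators. Each operator on the path is Fredholm by Step 2, so by homotopy invariance of the index,
\[
\mathrm{index}(T_\varphi-\lambda)=\mathrm{index}(T_{z^n}) \text{ for } n\geqslant0, \qquad \mathrm{index}(T_{\overline z^{|n|}}) \text{ for } n<0 .
\]
For $n\geqslant0$, $T_{z^n}$ is multiplication by $z^n$, which is injective with cokernel spanned by $1,\dots,z^{n-1}$, so its index is $-n$. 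For $n<0$, $T_{\overline z^{|n|}}=T_{z^{|n|}}^{*}$ has index $|n|=-n$. In both cases $\mathrm{index}(T_\varphi-\lambda I)=-\wind(\varphi(\partial\DD),\lambda)$.
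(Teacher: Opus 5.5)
\textbf{There is a genuine gap in Step 1, which you yourself flag as the main technical point.} Your overall plan is the standard route to this result, which the paper imports from the literature without proof: a symbol calculus modulo compacts that factors through $C(\partial\DD)$, a Weyl-sequence argument with $k_z$, and a homotopy to $z^n$. Steps 2 and 3 are fine once Step 1 is in place. But the justification you give in Step 1 is false. On the Bergman space, $H_{\overline z^k z^j}$ does \emph{not} have finite rank.

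The ``kills $z^mL_a^2$'' heuristic is a Hardy-space fact. There, $\overline z=1/z$ on $\partial\DD$ makes $\overline z^k z^m f$ analytic for $m\geqslant k$. Inside the disk this fails. In fact $H_{\overline z^k}$ is injective on $L_a^2$: since $\partial(\overline z^k f)/\partial\overline z=k\overline z^{k-1}f$, the condition $\overline z^k f\in L_a^2$ forces $f=0$. Hence $H_{\overline z^k z^j}=H_{\overline z^k}T_{z^j}$ is injective and has infinite-dimensional range. Concretely, $H_{\overline z}z^m=\overline z z^m-\frac{m}{m+1}z^{m-1}$ is nonzero for every $m\geqslant 0$, and these vectors are pairwise orthogonal because they lie in different rotation eigenspaces. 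So Step 1 as written does not show that $H_g$, and hence the semicommutator $T_{fg}-T_fT_g$, is compact. Both Step 2 and Step 3 depend on that.

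What is true, and what you need, is compactness rather than finite rank. This requires an actual Bergman-space computation:
\begin{itemize}
\item The operator $H_{\overline z}^{*}H_{\overline z}=T_{|z|^2}-T_zT_{\overline z}$ is diagonal in the basis $\{z^m\}$, with entries $\frac{m+1}{m+2}-\frac{m}{m+1}=\frac{1}{(m+1)(m+2)}\to 0$. So $H_{\overline z}$ is compact (indeed Hilbert--Schmidt).
\item The identity $H_{uv}=H_uT_v+(I-P)M_uH_v$, with $u=\overline z$ and $v=\overline z^{k-1}$, then gives compactness of $H_{\overline z^k}$ by induction.
\item Finally $H_{\overline z^kz^j}=H_{\overline z^k}T_{z^j}$ is compact.
\end{itemize}
Alternatively, compute directly that $\|H_{\overline z^k}z^m\|/\|z^m\|=k/\sqrt{(m+1)(m+k+1)}$ for $m\geqslant k$, with mutually orthogonal images. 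With this replacement, your density argument ($\|H_g\|\leqslant\|g\|_\infty$ plus Stone--Weierstrass) gives compactness of $H_g$ for all $g\in C(\overline{\DD})$, and the rest of the proof goes through.

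One smaller point in Step 3. Your radial extensions $r(\varphi(e^{i\theta})-\lambda)$ and $re^{in\theta}$ are not $\varphi-\lambda$ and $z^n$ (resp.\ $\overline z^{|n|}$) themselves. You should state explicitly that the endpoints are matched by Step 2: symbols that agree on $\partial\DD$ give Toeplitz operators differing by a compact operator, and hence with equal index.
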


The next lemma is quite useful for us to study the spectral properties of Toeplitz operators with harmonic  symbols on the Bergman space.

\begin{lem}[{\cite[Lemma 2.1]{SZ}}]\label{T}
For $f$ in the Bergman space $L_a^2$, we have
\begin{align*}
T_{\overline{z}}f(z)=\frac{1}{z^2} \int_0^{z} w f'(w)dw.
\end{align*}
\end{lem}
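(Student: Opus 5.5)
The plan is to verify the formula on the monomials $z^n$, which span a dense subspace of $L_a^2$, and then pass to general $f$ by a limiting argument using that convergence in $L_a^2$ implies locally uniform convergence on $\DD$.

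\emph{Monomials.} Recall that $\{z^k\}_{k\geqslant 0}$ is an orthogonal basis of $L_a^2$ with $\|z^k\|^2=\frac{1}{k+1}$ for the normalized area measure. For $f=z^n$ and $m\geqslant 0$ we compute
$$\langle \overline{z}z^n, z^m\rangle=\int_{\DD} z^n\overline{z^{m+1}}\,dA=\langle z^n,z^{m+1}\rangle=\frac{\delta_{n,m+1}}{n+1}.$$
Hence $T_{\overline z}1=0$. For $n\geqslant 1$ only the coefficient of $z^{n-1}$ survives in the expansion of $P(\overline z z^n)$, so
$$T_{\overline z}z^n=\frac{1/(n+1)}{\|z^{n-1}\|^2}\,z^{n-1}=\frac{n}{n+1}z^{n-1}.$$
On the other side,
$$\frac{1}{z^2}\int_0^z w\cdot n w^{n-1}\,dw=\frac{n}{n+1}z^{n-1},$$
and this expression vanishes when $n=0$. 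So the two sides agree on every monomial, and by linearity on all analytic polynomials.

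\emph{Extension to all of $L_a^2$.} Write $f=\sum a_n z^n\in L_a^2$ and let $f_N$ be its Taylor partial sums, so $f_N\to f$ in $L_a^2$.

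First, since $T_{\overline z}$ is bounded, $T_{\overline z}f_N\to T_{\overline z}f$ in $L_a^2$. Because point evaluations on $L_a^2$ are bounded, uniformly on compact subsets of $\DD$, this gives locally uniform convergence.

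Second, $f_N'\to f'$ locally uniformly on $\DD$, so $\int_0^z wf_N'(w)\,dw\to\int_0^z wf'(w)\,dw$ locally uniformly, the integral being taken along the segment $[0,z]$. Dividing by $z^2$ gives convergence for $z\neq 0$. Moreover $\int_0^z wf'(w)\,dw=\sum_{n\geqslant1}\frac{n a_n}{n+1}z^{n+1}$ vanishes to order $2$ at $0$, so the right-hand side is the holomorphic function $\sum_{n\geqslant 1}\frac{n}{n+1}a_n z^{n-1}$.

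Comparing the two pointwise limits on $\DD\setminus\{0\}$, and then at $0$ by continuity, yields the identity for every $f\in L_a^2$.

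The only mildly delicate point is this limiting step, in particular the apparent singularity at $z=0$. It is resolved by the observation that the integral vanishes to second order at the origin, so the right-hand side is genuinely holomorphic on $\DD$.
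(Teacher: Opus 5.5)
The paper gives no proof of this lemma; it is quoted from [SZ, Lemma 2.1], so there is no in-paper argument to compare yours against. Your proof is correct and complete. The computation $T_{\overline z}1=0$ and $T_{\overline z}z^n=\frac{n}{n+1}z^{n-1}$ matches the right-hand side on monomials. The passage to general $f\in L_a^2$ is also sound: it rests on the boundedness of $T_{\overline z}$, the locally uniform convergence of $f_N$ and $f_N'$, and the second-order vanishing of $\int_0^z wf'(w)\,dw$ at the origin. This monomial-plus-density argument is the standard way to establish the identity.
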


We end this section with the following lemma, which plays a key role in our study of the point spectra of Toeplitz operators with harmonic symbols on the Bergman space.

\begin{lem}[{\cite[Theorem 2.4]{GZZ}}]\label{key lemma}
Let $h$ be a function in the disk algebra $A(\mathbb D)$. Suppose that $\lambda$ is a complex number not in the essential spectrum of the Toeplitz operator $T_{\overline{z}+h}$. Then $\lambda$ is an eigenvalue of  $T_{\overline{z}+h}$ if and only if  either
$1+z[h(z)-\lambda]$ does not vanish on the unit disk or $1+z[h(z)-\lambda]$ has finitely many simple zeros $\big\{z_1, \cdots, z_k\big\}$ in the open unit disk which satisfy
\begin{align*}
z_j^2h'(z_j)=\frac{n_j+2}{n_j+1}
\end{align*}
 for some integer $n_j\in \big\{0, 1, 2, \cdots\big\}$ with $j=1, 2, \cdots, k$.
\end{lem}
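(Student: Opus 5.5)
The plan is to turn the eigenvalue equation into a first-order linear ODE with the help of Lemma \ref{T}, solve it explicitly, and then read off when the explicit solution is a genuine element of $L_a^2$. Put $g=h-\lambda\in A(\mathbb D)$. Since $T_hf=hf$ for analytic $h$, the condition $T_{\overline{z}+h}f=\lambda f$ with $f\in L_a^2$ becomes, by Lemma \ref{T},
\begin{align*}
\int_0^z wf'(w)\,dw=-z^2 g(z)f(z),\qquad z\in\mathbb D.
\end{align*}
Both sides vanish at $0$, so this identity is equivalent to the identity of their derivatives, $zf'=-(z^2gf)'$. After dividing by $z$, this becomes
\begin{align*}
(1+zg)f'=-(2g+zg')f .
\end{align*}

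Next I solve the ODE. Write $q=1+zg$, so that $q'=g+zg'$. Using $g/q=\frac1z\bigl(1-\frac1q\bigr)$, one gets
\begin{align*}
\frac{f'}{f}=-\frac{2g+zg'}{q}=-\frac{q'}{q}-\frac{g}{q},
\end{align*}
and the term $-g/q$ is analytic near $0$ because $q(0)=1$. On any simply connected region avoiding the zeros of $q$, the solutions are therefore exactly the constant multiples of
\begin{align*}
f=\frac{1}{q}\exp\Bigl(-\int_0^z\frac{g}{q}\Bigr).
\end{align*}
Consequently, if $q$ has no zeros in $\mathbb D$, this $f$ is analytic on $\mathbb D$. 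The essential spectrum is controlled by Lemma \ref{FI}: on $\partial\mathbb D$ we have $\overline z=1/z$, so $\overline z+h-\lambda=q/z$ there. Hence $\lambda\notin\sigma_{\mathrm e}$ means that $q$ does not vanish on $\partial\mathbb D$. By continuity, $|q|\ge c>0$ on an annulus $r<|z|\le1$, and so $q$ has only finitely many zeros in $\mathbb D$. On that annulus $f$ is bounded: $1/q$ is bounded, and the integrand $g/q$ is bounded, so the exponential is bounded. Therefore $f\in H^\infty\subset L_a^2$, and $\lambda$ is an eigenvalue in the zero-free case.

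For the case where $q$ has zeros, I analyze each zero $z_j$ locally. Any eigenvector $f$ must satisfy $f'/f=-q'/q-g/q$ away from the zeros of $f$ and $q$. At $z_j$, the residue of the right-hand side must equal $\operatorname{ord}_{z_j}f\ge0$, and the meromorphic function $f'/f$ can have at most a simple pole there.

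First suppose $z_j$ is a simple zero of $q$. Then $g(z_j)=-1/z_j$ and $q'(z_j)=-1/z_j+z_jh'(z_j)$. The residue of $-g/q$ at $z_j$ is $1/(z_j^2h'(z_j)-1)$, so the total residue is $-1+1/(z_j^2h'(z_j)-1)$. Requiring this to equal some $n_j\in\{0,1,2,\dots\}$ gives precisely $z_j^2h'(z_j)=\frac{n_j+2}{n_j+1}$. Conversely, when this holds at every zero, the locally defined solution extends analytically across each $z_j$: the monodromy is trivial because the residue is an integer, and $f$ has a zero of order $n_j$ rather than a pole. The solution is then globally analytic, and it is bounded near the boundary by the same argument as before.

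The step I expect to be the main obstacle is excluding multiple zeros. If $z_j$ is a zero of $q$ of order $m\ge2$, then $g(z_j)=-1/z_j\neq0$, so $-g/q$ has a pole of order exactly $m$ at $z_j$. Then $f'/f$ would have a pole of order at least $2$. This is impossible for a function analytic near $z_j$, since it would force an essential singularity through the exponential. Hence no nonzero analytic solution exists, and $\lambda$ is not an eigenvalue. Together with the converse direction, this gives the stated equivalence.
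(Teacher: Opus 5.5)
The paper does not prove this lemma itself (it is quoted from Guo, Zhao and Zheng), but your argument is correct and uses the same ODE-and-residue method the paper runs in the proof of Lemma~\ref{C}: reduce to $(1+zg)f'=-(2g+zg')f$ via Lemma~\ref{T}, match the residue of $f'/f$ at each zero $z_j$ of $q=1+zg$ with $\operatorname{ord}_{z_j}f$, and exclude multiple zeros by pole order. For boundedness in the case with zeros, the cleanest way to make ``the same argument as before'' precise is the factorization from Lemma~\ref{C}, $f=\frac{\prod_j(z-z_j)^{n_j+1}}{q}\exp\{-\int_0^z Q\}$ with $Q(w)=\frac{g(w)}{q(w)}+\sum_j\frac{n_j+1}{w-z_j}\in H^\infty$; this avoids bounding $q'/q$, which can be unbounded because $h'$ need not be bounded.
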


\section{Main Results}

The main result of this paper is:

\begin{thm}	\label{Main}
Let $h$ be in the disk algebra $A(\mathbb D)$. If there exists a constant $\delta>0$ such that $$|\overline{z}+h(z)|\geqslant \delta$$
for all  $z\in \mathbb D$,
then the Toeplitz operator $T_{\overline{z}+h}$ is invertible on the Bergman space $L_a^2$.
\end{thm}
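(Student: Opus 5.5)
The plan is to reduce invertibility to injectivity via the Fredholm index, and then rule out $0$ as an eigenvalue using Lemma \ref{key lemma}.

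\textbf{Step 1: reduction to injectivity.} Write $\varphi=\overline{z}+h$. Since $h\in A(\mathbb D)$, $\varphi$ is continuous on $\overline{\mathbb D}$. The hypothesis $|\varphi|\geqslant\delta$ on $\mathbb D$ passes by continuity to $\overline{\mathbb D}$, so $0\notin\varphi(\partial\mathbb D)$. By Lemma \ref{FI}, $T_\varphi$ is Fredholm with $\mathrm{index}\,T_\varphi=-\wind(\varphi(\partial\mathbb D),0)$. The loops $\theta\mapsto\varphi(re^{\mathrm i\theta})$, $0\leqslant r\leqslant 1$, form a homotopy in $\mathbb C\backslash\{0\}$ from $\varphi(\partial\mathbb D)$ to the constant loop $h(0)$. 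Hence the winding number is $0$, the index is $0$, and $T_\varphi$ is invertible if and only if $\ker T_\varphi=\{0\}$. So it suffices to show that $0$ is not an eigenvalue of $T_\varphi$, and Lemma \ref{key lemma} with $\lambda=0$ applies since $0\notin\sigma_{\mathrm e}(T_\varphi)$.

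\textbf{Step 2: the zeros of $g(z):=1+zh(z)$.} On $|z|=1$ we have $g(z)=z\overline z+zh(z)=z\varphi(z)$. Thus $g$ does not vanish on $\partial\mathbb D$ and
$$\wind(g(\partial\mathbb D),0)=1+\wind(\varphi(\partial\mathbb D),0)=1.$$
The argument principle, applied on circles $|z|=r$ with $r\to1$ and using continuity of $g$ on $\overline{\mathbb D}$, shows that $g$ has exactly one zero $z_1\in\mathbb D$ counted with multiplicity. In particular $z_1$ is simple and $z_1\neq0$ because $g(0)=1$. This excludes the first alternative of Lemma \ref{key lemma}. It remains to show
$$z_1^2h'(z_1)\notin\Big\{\tfrac{n+2}{n+1}:n=0,1,2,\dots\Big\}\subset(1,2].$$
From $h(z_1)=-1/z_1$ we get $g'(z_1)=-1/z_1+z_1h'(z_1)$, so $z_1^2h'(z_1)=1+z_1g'(z_1)$. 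The goal is therefore $z_1g'(z_1)\notin(0,1]$.

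\textbf{Step 3 (main obstacle).} The only information left is the identity
$$z\varphi(z)=g(z)-(1-|z|^2),$$
together with the fact that the left side vanishes only at $z=0$. I would first try a local analysis at $z_1$. Near $z_1$,
$$g(z)-(1-|z|^2)\approx g'(z_1)(z-z_1)-(1-|z_1|^2)+2\,\mathrm{Re}\big(\overline{z_1}(z-z_1)\big).$$
Suppose $z_1g'(z_1)=t\in(0,1]$. I would then restrict to the ray $z=sz_1$ with $s\in(0,1]$, along which
$$g(sz_1)-(1-s^2|z_1|^2)=h\text{-dependent term}\cdot s z_1+1-(1-s^2|z_1|^2),$$
and try to produce a zero of $z\varphi$ off the origin by an intermediate-value or degree argument. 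Concretely, compare the real-valued function $1-s^2|z_1|^2$ with the analytic $g$ on a small disk around $z_1$ and use Rouch\'e-type degree counting for the map $z\mapsto g(z)-(1-|z|^2)$.

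If this direct route fails, my fallback is a continuity argument in $\lambda$. Let $W$ be the component of $\{\lambda:\varphi-\lambda\neq0\ \text{on}\ \overline{\mathbb D}\}$ containing $0$. For $\lambda\in W$ the reasoning of Steps 1 and 2 applies verbatim to $h-\lambda$, giving a unique simple zero $z_1(\lambda)$, analytic in $\lambda$. The quantity $q(\lambda)=z_1(\lambda)^2h'(z_1(\lambda))$ is then analytic on $W$. The aim would be to show that $q$ cannot take values in $(1,2]$ there, by following $\lambda$ toward $\partial W$ and using the boundary behaviour of $z_1(\lambda)$. I expect this final exclusion to be where the real work of the proof lies.
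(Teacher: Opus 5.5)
Your Steps 1 and 2 are correct and coincide with what the paper establishes inside the proof of Lemma~\ref{C}: the index is zero, and $g=1+zh$ has a unique simple zero $z_1\neq 0$. The identity $z_1^2h'(z_1)=1+z_1g'(z_1)$ is also right.

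\textbf{The gap is Step 3.} Ruling out $z_1g'(z_1)=1/(n+1)$ is the entire content of the theorem, and you do not prove it. The routes you sketch also cannot work as stated. What you must contradict is the absence of zeros of $z\varphi=g-(1-|z|^2)$ away from the origin, but no such zero is forced near $z_1$, since $z_1\varphi(z_1)=-(1-|z_1|^2)\neq 0$.
\begin{enumerate}[label=(\roman*)]
\item Expansions at $z_1$, or along the ray through $z_1$, therefore see nothing. An intermediate-value argument is unavailable anyway, because $z\varphi$ is complex-valued.
\item Winding counts give nothing either. The degree of $z\varphi$ on $\partial\mathbb D$ is $1$, and it is used up by the zero at $0$, which has index $+1$ because $h(0)\neq 0$. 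On small circles about $z_1$ the degree is $0$.
\item The fallback with $q(\lambda)$ contains no mechanism controlling $z_1(\lambda)$ or $q$ as $\lambda\to\partial W$, so it is not yet an argument.
\end{enumerate}
The obstruction is analytic, not topological.

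\textbf{The missing idea} is to turn the residue condition into a global analytic function and then apply an extremal argument. Suppose $t=z_1g'(z_1)=1/(n+1)$. Then $-h/g$ has a simple pole at $z_1$ with residue $-h(z_1)/g'(z_1)=1/t=n+1$. So the solution of $G'=-(h/g)G$ with $G(0)=1$ has these properties:
\begin{enumerate}[label=(\roman*)]
\item it is single-valued and analytic on $\mathbb D$, and continuous on $\overline{\mathbb D}$, as in the proof of Lemma~\ref{C} (indeed $f=(zG)'$ is the kernel element);
\item $G'(0)=-h(0)\neq 0$;
\item $g\,(G+zG')=G$.
\end{enumerate}
The paper then maximizes $\Psi(z)=(1-|z|^2)|G(z)|^2$, which vanishes on $\partial\mathbb D$ and is positive at $0$.
\begin{enumerate}[label=(\roman*)]
\item The maximum is attained at some $\alpha\in\mathbb D$ with $G(\alpha)\neq 0$.
\item $\alpha\neq 0$, because $\frac{\partial\Psi}{\partial\overline{z}}(0)=\overline{G'(0)}\neq 0$.
\item The equation $\frac{\partial\Psi}{\partial\overline{z}}(\alpha)=0$ gives $\alpha G'(\alpha)/G(\alpha)=|\alpha|^2/(1-|\alpha|^2)$.
\item Hence $g(\alpha)=G(\alpha)/(G(\alpha)+\alpha G'(\alpha))=1-|\alpha|^2$, i.e.\ $\overline{\alpha}+h(\alpha)=0$, contradicting $|\overline{z}+h|\geqslant\delta$.
\end{enumerate}
Since $\Psi(z_1)=0$, the offending zero of $\overline{z}+h$ is produced away from $z_1$. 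That is exactly why a local analysis at $z_1$ cannot find it.

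Incidentally, for any real $t>0$ the function $|G|$ is single-valued even when $1/t\notin\mathbb Z$. So the same argument proves your stronger target $z_1g'(z_1)\notin(0,1]$.
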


Before presenting the proof of the above theorem, we require one more lemma which is helpful for us to characterize certain analytic properties of the eigenvectors of the Toeplitz operator $T_{\overline{z}+h}$.

\begin{lem}\label{C}
Let $h$ be in the disk algebra $A(\mathbb D)$. Suppose that $\overline{z}+h$ is invertible in $L^\infty(\mathbb D)$.
If $0$ is an eigenvalue of the Toeplitz operator $T_{\overline{z}+h}$, then the corresponding eigenvector $f$ belongs to
$A(\mathbb D)$.
\end{lem}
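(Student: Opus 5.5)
Using Lemma \ref{T}, I will rewrite the eigenvalue equation $T_{\overline{z}+h}f=0$ as an ODE and solve it explicitly. Since $h$ is analytic, $T_h f = hf$. So $T_{\overline{z}+h}f=0$ reads
\begin{align*}
\frac{1}{z^2}\int_0^z w f'(w)\,dw + h(z)f(z)=0,
\end{align*}
that is, $\int_0^z wf'(w)\,dw = -z^2h(z)f(z)$. Differentiating gives
\begin{align*}
zf' = -\bigl(2zh+z^2h'\bigr)f - z^2hf',
\end{align*}
hence $z\bigl(1+zh\bigr)f' = -z\bigl(2h+zh'\bigr)f$, i.e.
\begin{align*}
(1+zh)f' = -(2h+zh')f .
\end{align*}
Note that $1+zh = z(\overline{z}+h)$ on $\partial\DD$. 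Because $\overline{z}+h$ is invertible in $L^\infty$ and continuous on $\overline{\DD}$, the function $1+zh$ is nonvanishing on $\partial\DD$, hence on an annulus $r_0<|z|\le 1$. It has finitely many zeros in $\DD$.

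Away from the zeros of $1+zh$, the equation integrates to
\begin{align*}
f(z)=C\exp\Bigl(-\int \frac{2h+zh'}{1+zh}\,dz\Bigr).
\end{align*}
Writing $2h+zh' = \frac{(1+zh)'}{1}+h$, since $(1+zh)'=h+zh'$, we get
\begin{align*}
\frac{2h+zh'}{1+zh} = \frac{(1+zh)'}{1+zh}+\frac{h}{1+zh},
\end{align*}
and therefore
\begin{align*}
f = \frac{C}{1+zh}\exp\Bigl(-\int \frac{h}{1+zh}\,dz\Bigr).
\end{align*}
Since $f$ is analytic on all of $\DD$, the local analysis at interior zeros (the content of Lemma \ref{key lemma}) is already built in. What remains is the behaviour near $\partial\DD$.

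On the annulus $r_0<|z|<1$, the function $g=h/(1+zh)$ is continuous up to $\partial\DD$, since the denominator is bounded away from $0$ there. Fix a base point $z_0$ with $|z_0|=r_1\in(r_0,1)$. For $|z|$ close to $1$, define $G(z)=\int_{z_0}^z g\,dw$ along paths inside the closed annulus $r_1\le|w|\le 1$. I claim $G$ is then well defined modulo periods and extends continuously to $\partial\DD$. Indeed, $g$ is bounded, so integrals along radial segments converge uniformly. The multivaluedness comes only from the period $\oint_{|w|=r}g\,dw$. This period is constant in $r$ by Cauchy's theorem, and $\exp(-G)$ stays single-valued because $f$ is. Hence
\begin{align*}
f = \frac{C\,e^{-G}}{1+zh}
\end{align*}
on the annulus, and $f$ extends continuously to $\{r_1\le|z|\le1\}$. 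On $|z|\le r_1$, $f$ is analytic on a neighbourhood of the closed disk, so it is continuous there. Together these give $f\in A(\DD)$.

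The main obstacle I expect is the rigorous continuity of $G$ up to the boundary, where $h$ is only continuous on $\partial\DD$ and not differentiable. The first thing I would try is to prove uniform Cauchy convergence of $G(r\zeta)$ as $r\to1$, uniformly in $\zeta\in\partial\DD$. This follows because $|G(r\zeta)-G(s\zeta)|\le \|g\|_\infty|r-s|$ on radial segments, and the base integral along the circle $|w|=r_1$ is fixed. Continuity of the limit function in $\zeta$ then follows from uniform convergence of continuous functions. It then remains to check that $1/(1+zh)$ is continuous on the closed annulus, which is immediate.
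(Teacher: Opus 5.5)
Your proof is correct, but it takes a genuinely different route from the paper.

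The paper argues globally. From Lemma \ref{FI} and the argument principle it shows that $1+zh$ has exactly one simple zero $z_0$ in $\mathbb D$. It then uses Lemma \ref{key lemma} to get $z_0^2h'(z_0)=\frac{n_0+2}{n_0+1}$, so that $\frac{h}{1+zh}+\frac{n_0+1}{z-z_0}$ has zero residue at $z_0$. With this it builds an explicit solution $F=g\,(z-z_0)^{n_0}\in A(\mathbb D)$ of the ODE, and transfers regularity to $f$ through the one-dimensionality of the solution space.

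You instead observe that interior regularity is automatic, since $f\in L^2_a$ is already analytic on $\mathbb D$. So only an annulus near $\partial\mathbb D$ matters. There $1+zh$ has no zeros, and the identity $\frac{2h+zh'}{1+zh}=\frac{(1+zh)'}{1+zh}+\frac{h}{1+zh}$ removes $h'$ from the picture.

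What your route buys:
\begin{enumerate}
\item It is more elementary: no Fredholm index, no argument principle, no Lemma \ref{key lemma}, and no uniqueness step.
\item It is more general. You only use $\overline z+h\neq 0$ on $\partial\mathbb D$. Applied to $h-\lambda$, this shows that every eigenvector of $T_{\overline z+h}$ for an eigenvalue $\lambda\notin\sigma_{\mathrm e}(T_{\overline z+h})$ lies in $A(\mathbb D)$.
\end{enumerate}
The paper's route buys more structure in return: an explicit eigenvector, its order of vanishing $n_0$ at $z_0$, and the fact that the kernel is one-dimensional.

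One small tidy-up. You can avoid the discussion of periods, and of paths touching $\partial\mathbb D$, by working with the single-valued function $u=(1+zh)f$. It satisfies $u'=-\frac{h}{1+zh}\,u$ on $r_0<|z|<1$, so for $\zeta\in\partial\mathbb D$ and $r_1\le r<1$,
$u(r\zeta)=u(r_1\zeta)\exp\bigl(-\zeta\int_{r_1}^{r}\frac{h(t\zeta)}{1+t\zeta h(t\zeta)}\,dt\bigr)$.
Your uniform-convergence argument then applies verbatim.
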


\begin{proof}
Since  the origin  is not contained in the closure of
the range of $\overline{z}+h$, we deduce  by means of the homotopy theory that $$\mathrm{wind}\big([\overline{z}+h(z)]|_{\partial \mathbb D}, 0\big)=0.$$
Moreover, since $\overline{z}+h$ does not vanish on the unit circle $\partial \mathbb D$, we conclude by Lemma \ref{FI} that $0$ is not in the essential spectrum of $T_{\overline{z}+h}$ and  the Fredholm index  of $T_{\overline{z}+h}$ is given by
$$0=\mathrm{index}(T_{\overline{z}+h})=-\mathrm{wind}\big([\overline{z}+h(z)]|_{\partial \mathbb D}, 0\big)=-\mathrm{wind}\Big(\frac{1+zh(z)}{z}\Big|_{\partial \mathbb D}, 0\Big).$$
It follows from the argument principle that
$1+zh(z)$ has exactly one simply zero $z_0$ in $\mathbb D$. Then we have by Lemma \ref{key lemma} that
\begin{align}\label{n0}
 z_0^2 h'(z_0)=\frac{n_0+2}{n_0+1}
 \end{align}
for some integer $n_0\geqslant 0$.

Clearly, $f$ is an eigenvector of $T_{\overline{z}+h}$  corresponding to $0$ if and only if
$T_{\overline{z}}f=-hf.$
By Lemma \ref{T}, we obtain the following integral equation:
$$\frac{1}{z^2}\int_0^z wf'(w)dw=-h(z)f(z).$$
Multiplying both sides of the above equation by $z^2$ and then taking derivatives yields the following first order differential equation:
\begin{align}\label{diff}
\big[1+zh(z)\big]f'(z)=-\big[2h(z)+zh'(z)\big]f(z).
\end{align}
Therefore, $f$ is an eigenvector of the Toeplitz operator $T_{\overline{z}+h}$  corresponding to $0$ if and only if $f\in L^2_a\backslash\{0\}$ satisfies Equation (\ref{diff}).

Define
$$g(z)=\mathrm{exp}\left\{-\int_{0}^z \bigg(\frac{2h(w)+wh'(w)}{1+wh(w)}+\frac{n_0}{w-z_0}\bigg)dw \right\}$$
and
$$F(z)=g(z)(z-z_0)^{n_0},$$
where $n_0$ comes from (\ref{n0}).  Next, we are going to show that $F$ is an eigenvector of $T_{\overline{z}+h}$ corresponding to $0$. First, we show that $F\in A(\mathbb D)\subset L_a^2$. Indeed, since
$z_0$ is the unique simple zero of $1+wh(w)$ in $\mathbb D$ and $1+wh(w)$ does not vanish on $\partial \mathbb D$,  we can write
$$1+wh(w)=(w-z_0)\psi(w),$$
where $\psi\in A(\mathbb D)$ and $\psi(z)\neq 0$ for $|z|\leqslant 1$.
Let
$$Q(w)=\frac{h(w)}{1+wh(w)}+\frac{n_0+1}{w-z_0}.$$
Since $h(z_0)\neq 0$, we have  by the Cauchy residue theorem that
\begin{align*}
\mathrm{Res}(Q; z_0)&=\frac{h(z_0)}{\psi(z_0)}+n_0+1=\frac{h(z_0)}{h(z_0)+z_0h'(z_0)}+n_0+1\\
&=\frac{z_0 h(z_0)}{z_0 h(z_0)+z_0^2 h'(z_0)}+n_0+1\\
&=\frac{-1}{-1+\frac{n_0+2}{n_0+1}}+n_0+1=0.
\end{align*}
From the preceding discussion, we conclude  that $Q\in A(\mathbb D)$. Moreover, since
\begin{align*}
\frac{2h(w)+wh'(w)}{1+wh(w)}+\frac{n_0}{w-z_0}&=\frac{[1+wh(w)]'}{1+wh(w)}+\frac{h(w)}{1+wh(w)}+\frac{n_0}{w-z_0}\\
&=\frac{\psi'(w)}{\psi(w)}+\frac{1}{w-z_0}+Q(w)-\frac{n_0+1}{w-z_0}+\frac{n_0}{w-z_0}\\
&=\frac{\psi'(w)}{\psi(w)}+Q(w),
\end{align*}
we get that
\begin{align*}
g(z)=\mathrm{exp}\left\{-\int_{0}^z \frac{\psi'(w)}{\psi(w)}dw \right\}\mathrm{exp}\left\{-\int_{0}^z Q(w)dw \right\}.
\end{align*}
Denote
$$H(z)=\mathrm{exp}\left\{-\int_{0}^z \frac{\psi'(w)}{\psi(w)}dw \right\}.$$
Then it is easy to show that
$$(\psi H)'(z)=0, \ \ \ \ z\in \mathbb D,$$
to obtain
$$\psi(z)H(z)=\psi(0)H(0)=\psi(0), \ \ \ \ z\in \mathbb D.$$
Therefore, $H(z)=\frac{\psi(0)}{\psi(z)}$ and
\begin{align*}
g(z)
=\frac{\psi(0)}{\psi(z)}\mathrm{exp}\left\{-\int_{0}^z Q(w)dw \right\}.
\end{align*}
It follows that $g$ belongs to $A(\mathbb D)$, and so does $F$.

To show that $F$ is an
eigenvector of $T_{\overline{z}+h}$ corresponding to $0$, it remains to verify that $F$ satisfies (\ref{diff}). From the expressions of $g$ and $F$ we have that
\begin{align*}
\frac{F'(z)}{F(z)}&=\frac{n_0}{z-z_0}+\frac{g'(z)}{g(z)}\\
&=\frac{n_0}{z-z_0}-\bigg(\frac{2h(z)+z h'(z)}{1+zh(z)}+\frac{n_0}{z-z_0}\bigg)\\
&=-\frac{2h(z)+z h'(z)}{1+zh(z)},
\end{align*}
which gives
$$[1+zh(z)]F'(z)=-[2h(z)+zh'(z)]F(z).$$
This shows that $F$ is the
eigenvector of $T_{\overline{z}+h}$ corresponding to $0$.

Finally, since the solution space of a first-order linear differential equation is one-dimensional, the function $f$ in the hypothesis must be a constant multiple of $F$. Consequently, $f$ is also in $A(\mathbb D)$. This finishes the proof of the lemma.
\end{proof}

Now we are ready to prove Theorem \ref{Main}.

\begin{proof}[Proof of Theorem \ref{Main}]
Suppose that $\overline{z}+h$ is invertible in $L^\infty(\mathbb D)$  but the Toeplitz operator $T_{\overline{z}+h}$ is not invertible on the Bergman space. Using the same argument as the one used in the proof of Lemma \ref{C}, we conclude  that $T_{\overline{z}+h}$ is a Fredholm operator on $L_a^2$ and its Fredholm index is $0$. Hence, we have $\mathrm{ker}(T_{\overline{z}+h})\neq \{0\}$.

Let $f$ be a nonzero function in $L_a^2$ such that $T_{\overline{z}+h}f=0$. As shown in the proof of Lemma \ref{C}, $f$ satisfies that
$$\int_0^z wf'(w)dw+z^2f(z)h(z)=0.$$
By applying integration by parts, we get that
$$\int_0^z f(w)dw=z[1+zh(z)]f(z).$$
Define $$G(z)=\begin{cases}
\frac{\int_0^z f(w)dw}{z}, & z\neq 0,\vspace{2mm}\\
f(0), & z=0.\end{cases}$$
Then $G$ is analytic on $\mathbb D$ and
$$f(z)=G(z)+zG'(z), \ \ \ \ z\in \mathbb D.$$
and
$$G(z)=[1+zh(z)]f(z)=[1+zh(z)][G(z)+zG'(z)].$$

Next, we are going to show that the function $G$ satisfies that
$G(0)\neq 0$ and $G'(0)\neq 0$.
Indeed, since $f(0)=G(0)$, we need only to show that $f(0)\neq 0$. Otherwise, there exists
an integer $k\geqslant 1$ such that
$$f(z)=c_k z^k+c_{k+1}z^{k+1}+\cdots$$
with $c_k\neq 0$. It follows that
$$\int_0^z f(w)dw=\frac{c_k}{k+1}z^{k+1}+\frac{c_{k+1}}{k+2}z^{k+2}+\cdots,  \ \ \ \ |z|<1.$$
On the other hand,
$$\int_0^z f(w)dw=zG(z)=z[1+zh(z)]f(z)=zf(z)+z^2h(z)f(z)=c_{k}z^{k+1}+\cdots.$$
By comparing coefficients, we obtain
$$\frac{c_k}{k+1}=c_{k},$$
to get that $k=0$ since $c_k\neq 0$. This contradicts the fact that $k\geqslant1$, which
gives  $G(0)\neq 0$.

In order to show $G'(0)\neq 0$, we use
\begin{align} \label{G}
G(z)=[1+zh(z)][G(z)+zG'(z)]
\end{align}
to obtain that
$$\frac{G'(z)}{G(z)}=\frac{1}{z}\Big[\frac{1}{1+zh(z)}-1\Big]=
-\frac{h(z)}{1+zh(z)}.$$
It follows that
$$\frac{G'(0)}{G(0)}=-h(0)\neq 0,$$
since $\overline{z}+h(z)\neq 0$ for all $|z|\leqslant 1$. Therefore, $G'(0)\neq 0$.

Now we consider the following real-valued function:
$$\Psi(z)=(1-|z|^2)|G(z)|^2,  \ \ \ \  z\in  \overline{\mathbb D}.$$
According to Lemma \ref{C}, we obtain  that $f\in A(\mathbb D)$, and so does $G$. This implies that $\Psi\in C (\overline{\mathbb D})$. Moreover, $\Psi|_{\partial \mathbb D}=0$ and
$\Psi(0)=|G(0)|^2>0$.
Therefore, there exists a  point $\alpha\in \mathbb D$ such that $\Psi$ attains its maximum on $\overline{\mathbb D}$ and
$$\frac{\partial \Psi}{\partial \overline{z}}(\alpha)=0, \ \ \ \  \Psi(\alpha)\geqslant\Psi(0)>0.$$
This implies that $G(\alpha)\neq 0.$

Taking the partial derivative of $\Psi$
 with respect to $\overline{z}$ gives
 $$\frac{\partial \Psi}{\partial \overline{z}}=-z|G(z)|^2+(1-|z|^2)G(z)\overline{G'(z)}.$$
Evaluating at $z=0$, we have
$$\frac{\partial \Psi}{\partial \overline{z}}(0)=G(0)\overline{G'(0)}\neq 0,$$
which means that $0$ is not a maximum point of $\Psi$. Thus, we derive that  $0<|\alpha|<1$.
However, we obtain by $\frac{\partial \Psi}{\partial\overline{ z}}(\alpha)=0$ that
$$-\alpha |G(\alpha)|^2+(1-|\alpha|^2)G(\alpha) \overline{G'(\alpha)}=0.$$
This gives
$$\overline{\alpha}=(1-|\alpha|^2) \frac{G'(\alpha)}{G(\alpha)},$$
since $G(\alpha)\neq 0$. Furthermore,
$$|\alpha|^2=(1-|\alpha|^2) \frac{\alpha G'(\alpha)}{G(\alpha)},$$
which is equivalent to
$$\frac{|\alpha|^2}{1-|\alpha|^2}=\frac{\alpha G'(\alpha)}{G(\alpha)}.$$
Using (\ref{G}) again, we have
$$1+\alpha h(\alpha)=\frac{G(\alpha)}{G(\alpha)+\alpha G'(\alpha)}=\frac{1}{1+\frac{\alpha G'(\alpha)}{G(\alpha)}}=\frac{1}{1+\frac{|\alpha|^2}{1-|\alpha|^2}}=1-|\alpha|^2,$$
which yields that
$$\overline{\alpha}+h(\alpha)=0.$$
But this is impossible, since $|\overline{z}+h|$ is bounded below  by $\delta$ on $\mathbb D$.
The contradiction yields that the Toeplitz operator $T_{\overline{z}+h}$ is invertible on
the Bergman space provided that $\overline{z}+h$ is invertible in $L^\infty(\mathbb D)$, completing the proof.
\end{proof}

 	\vspace{3mm}

\subsection*{Acknowledgments}
We thank Professor Dechao Zheng  for many useful discussions and suggestions. This work was supported by  National Natural Science Foundation of China (12371125) and  Chongqing Natural Science Foundation
 (CSTB2024NSCQ-MSX0177).

\end{document}